\newcommand{\be}{\begin{equation}}
\newcommand{\ee}{\end{equation}}
\newcommand{\dn}{{\rm \,dn}}
\newcommand{\R}{{\mathbb R}}
\newcommand{\ve}{{\varepsilon}}
\numberwithin{equation}{section}
\numberwithin{figure}{section}
\newtheorem{theorem}{Theorem}[section]
\newtheorem{proposition}[theorem]{Proposition}
\newtheorem{remark}[theorem]{Remark}
\newtheorem{lemma}[theorem]{Lemma}
\newtheorem{definition}[theorem]{Definition}
\begin{document}
\vglue-1cm \hskip1cm
\title[Periodic Waves for the Modified Kawahara Equation]{On the Stability of Periodic Traveling Waves for the Modified Kawahara Equation}

\begin{center}

\subjclass[2000]{76B25, 35Q51, 35Q53.}

\keywords{Orbital stability, modified Kawahara equation, periodic traveling wave solutions}

\maketitle

{\bf Gisele Detomazi Almeida}

{Universidade Federal do Tocantins - Campus de Arraias\\
Av. Universit\'aria, s/n, CEP 77330-000, Arraias, TO, Brazil.}\\
{gisele@uft.edu.br}

\vspace{1mm}

{\bf Fabr\'icio Crist\'ofani }

{IMECC-UNICAMP\\
	Rua S\'ergio Buarque de Holanda, 651, CEP 13083-859, Campinas, SP,
	Brazil.}\\
{ fabriciocristofani@gmail.com}

\vspace{1mm}

{\bf F\'abio Natali}

{Departamento de Matem\'atica - Universidade Estadual de Maring\'a\\
	Avenida Colombo, 5790, CEP 87020-900, Maring\'a, PR, Brazil.}\\
{fmanatali@uem.br}

\end{center}

\begin{abstract}
In this paper, we present the first result concerning the orbital stability of periodic traveling waves for the modified Kawahara equation. Our method is based on the Fourier expansion of the periodic wave in order to know the behaviour of the nonpositive spectrum of the associated linearized operator around the periodic wave combined with a recent development which significantly simplifies the obtaining of orbital stability results. 
\end{abstract}

\section{Introduction}

The orbital stability of periodic traveling-wave solutions associated to the modified Kawahara equation
\be\label{horder}
u_t+u^2u_x+\gamma u_{xxx}-u_{xxxxx}=0,
\ee
will be shown in this paper. Here, $\gamma\geq0$, $u:\R\times\R\to\R$ is a real spatially $L$-periodic function. Equation $(\ref{horder})$ models wave propagation on a nonlinear transmission line (see \cite{Kano}). \\
\indent Formally, equation $(\ref{horder})$ admits the conserved quantities
\begin{equation}\label{Eu}
	P(u)=\frac{1}{2}\int_{0}^{L}\Big(u_{xx}^2+\gamma u_x^2-\frac{1}{6}u^4\Big)dx,
\end{equation}
\begin{equation}\label{Mu}
	F(u)=\frac{1}{2}\int_{0}^{L}u^2dx\ \ \ \mbox{and} \ \ \ \ M(u)=\int_{0}^{L}u\,dx.
\end{equation}

A traveling wave solution for \eqref{horder} is a solution of the form $u(x,t)=\phi(x-\omega t)$, where $\omega$ is a real constant representing the wave speed and $\phi:\R\to\R$ is a periodic function. Substituting this form into (\ref{horder}), we obtain
\begin{equation}\label{ode-wave}
\phi''''-\gamma\phi''+\omega\phi-\frac{1}{3}\phi^3+A=0,
\end{equation}
where $A$ is a constant of integration.

In view of the conserved quantities (\ref{Eu})-(\ref{Mu}), we may define the augmented Lyapunov functional,
\begin{equation}\label{lyafun}
	G(u)=P(u)+\omega F(u)+AM(u),
\end{equation}
and the linearized operator around the wave $\phi$,
\begin{equation}\label{operator}
	\mathcal{L}:=G''(\phi)=\partial_x^4-\gamma\partial_x^2+\omega-\phi^2.
\end{equation}
In particular, we see that $\phi$ is a critical point of $G$.\\
\indent Now, we present some contributors concerning the orbital stability of explicit periodic/solitary waves related 
to the generalized Kawahara equation

\be\label{horder1}
u_t+u^pu_x+\gamma u_{xxx}-u_{xxxxx}=0,
\ee
where $p\geq1$ is an integer. In fact, for the case $p=1$, the authors in \cite{ACN} established the orbital stability of explicit periodic traveling waves solution of the form

\begin{eqnarray}\label{sol}
\phi(x) = a &+& b\left(\mbox{dn}^2\left(\frac{2K}{L}x,k\right)-\frac{E}{K}\right)   \nonumber \\
&+& d\left(\mbox{dn}^4\left(\frac{2K}{L}x,k\right)-(2-k^2)\frac{2E}{3K}+\frac{1-k^2}{3}\right), \label{sol1}
\end{eqnarray}
where $a$, $b$ and $d$ are real parameters. Here $\dn$ represents the Jacobi elliptic function of  dnoidal type, $K=K(k)$ is the complete elliptic integral of the first kind, $E=E(k)$ is the complete elliptic integral of the second kind and both of them depend on the elliptic modulus $k\in(0,1)$ (see \cite{bird} for additional details). The method used in \cite{ACN} to obtain the stability was an adaptation of the method in \cite{AP}. \\
\indent Regarding the orbital stability of solitary waves, Albert \cite{albert1} determined that the solitary wave
$\phi(x)=\mbox{sech}^4(bx)$, where $b$ is uniquely determined by a single wave-speed $\omega=\omega_0$ (there is no smooth curve of explicit solitary waves), is a stable solution for the equation $(\ref{horder1})$ with $p=1$. To do so, the author used the orthogonality of Gegenbauer polynomials to prove that the quantity
$\mathcal{I}=\langle\mathcal{L}\Phi,\Phi\rangle$ is strictly negative (see Lemma $\ref{prop2}$).

\indent The explicit periodic solution for the equation $(\ref{ode-wave})$ is given by 

\begin{equation}\label{solkawa1}
\phi(x)=a + b\left[\dn^2\left(\frac{2K(k)}{L}x,k\right)-\frac{E(k)}{K(k)}\right],
\end{equation}
where $a$ and $b$ depends smoothly on the wave speed $\omega$.\\ 
\indent Since the explicit solution in $(\ref{solkawa1})$ is determined, it is possible to use the approach in \cite{AN} in order to determine the behaviour of the nonpositive spectrum of $\mathcal{L}$ in $(\ref{operator})$. With the previous knowledge of the nonpositive spectrum of $\mathcal{L}$, we are able to use the recent developments in \cite{ANP} and \cite{Stuart} to establish a result of orbital stability for positive and periodic solutions associated to the equation $(\ref{horder})$. \\
\indent Next section is devoted to present the basic framework of stability. In Section 3, we present the existence of a positive solution for the equation $(\ref{ode-wave})$ and our result of orbital stability in the energy space $H_{per}^{2}([0,L])$.

\section{Basic Framework of Orbital Stability}\label{OSPW}

We follow the arguments contained in \cite{ANP} in order to present a basic framework of the orbital stability of periodic waves. Consider $u$ and $v$ in the energy space $X:=H_{per}^2([0,L])$, we define $\rho$ the ``distance'' between $u$ and $v$ as
	$\rho(u,v)=\inf_{y\in\mathbb{R}}||u-v(\cdot+y)||_{X}.$ Roughly speaking the distance between $u$ and $v$ is measured trough the distance between $u$ and the orbit of $v$, generated by translations.

Our precise definition of orbital stability is given below.
\begin{definition}\label{defstab}
	We say that an $L$-periodic solution $\phi$ is orbitally stable in $X$, by the periodic flow of \eqref{horder},  if for any $\ve>0$ there exists $\delta>0$ such that for any $u_0\in X$ satisfying $\|u_0-\phi\|_X<\delta$, the solution $u(t)$ of \eqref{horder} with initial data $u_0$ exists globally and satisfies
	$
	\rho(u(t),\phi)<\ve,
	$
	for all $t\geq0$.
\end{definition}

\begin{remark}
The Cauchy problem associated to the evolution equation $(\ref{horder})$ is locally well-posed in the energy space $H_{per}^2([0,L])$  according with the recent development in \cite{kwak}. The global well-posedness in the same space can be obtained by combining the local theory with the Gagliardo-Nirenberg inequality.
\end{remark}

\indent In what follows, we define the smooth functional $Q:X\rightarrow\mathbb{R}$ given by 
\begin{equation}\label{functQ}
Q(u)=\nu F(u)+\mu M(u),
\end{equation}
where $\mu$ and $\nu$ are real parameters which will be chosen later.\\
\indent For a given $\varepsilon>0$, we define the $\varepsilon$-neighborhood of the orbit $O_\phi=\{\phi(\cdot+y), y\in\R\}$ as
$
	U_{\varepsilon} := \{u\in X;\ \rho(u,\phi) < \varepsilon\}.
$
We also set  
$\Upsilon_0=\{u\in X;\ \langle Q'(\phi),u\rangle=0\},
$
where $\langle\cdot,\cdot\rangle$ denotes the scalar product in $L^2_{per}([0,L])$.
Note that $\Upsilon_0$ is exactly the tangent space to $\{u\in X; Q(u)=Q(\phi)\}$ at $\phi$.

In order to prove the desired stability we follow the strategy put forward in \cite{CNP}, \cite{NP1}, and \cite{Stuart}. Let us start by showing that $\mathcal{L}$ is strictly positive when restricted to the space $\Upsilon_0\cap \{\phi'\}^\perp$. To do so, we need to assume the following hypothesis:

\medskip

\begin{flushleft}
	$(H)$ There exists a $L$-periodic solution $\phi \in C^{\infty}_{per}([0,L])$ of (\ref{ode-wave}) with fixed period $L>0$. Moreover, the self-adjoint operator $\mathcal{L}$ has only one negative eigenvalue which is simple and zero is a eigenvalue whose eigenfunction is $\phi'$.
\end{flushleft}

\medskip

\begin{lemma}\label{prop2}
		Suppose that assumption $(H)$ occurs.  Let $\Phi$ be a smooth function such that 
		$\langle\mathcal{L}\Phi,\varphi\rangle=0$, for all $\varphi\in \Upsilon_0$ and
	$\mathcal{I}:=\langle\mathcal{L}\Phi,\Phi\rangle<0.$
	Thus, there exists $c>0$ such that
	$\langle\mathcal{L}v,v\rangle\geq c||v||_{X}^2,$
	for all $v\in \Upsilon_0\cap \{\phi'\}^\perp$.
\end{lemma}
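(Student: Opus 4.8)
The plan is to reduce the statement to the classical fact that a self-adjoint operator with a single negative direction becomes strictly positive on the intersection of the hyperplane $\{Q'(\phi)\}^\perp$ with $\{\phi'\}^\perp$ exactly when $\langle\mathcal{L}^{-1}Q'(\phi),Q'(\phi)\rangle<0$, and then to upgrade the resulting $L^2$-coercivity to the energy norm. Write $\psi:=Q'(\phi)$, so that $\Upsilon_0=\{\psi\}^\perp$. Since $\langle\mathcal{L}\Phi,\varphi\rangle=0$ for every $\varphi\in\Upsilon_0$, the vector $\mathcal{L}\Phi$ is orthogonal to $\{\psi\}^\perp$, hence $\mathcal{L}\Phi=\theta\psi$ for some scalar $\theta$; as $\mathcal{L}\Phi=0$ would force $\Phi\in\ker\mathcal{L}$ and $\mathcal{I}=0$, the condition $\mathcal{I}<0$ gives $\theta\neq0$. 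Because $\ker\mathcal{L}=\mathrm{span}\{\phi'\}$ and $\psi\perp\phi'$, we may write $\Phi=\theta\mathcal{L}^{-1}\psi+s\phi'$, whence $\mathcal{I}=\langle\mathcal{L}\Phi,\Phi\rangle=\theta^2\langle\mathcal{L}^{-1}\psi,\psi\rangle$. Thus the hypothesis is precisely the sign condition $\langle\mathcal{L}^{-1}\psi,\psi\rangle<0$. I also record that $\langle\psi,\phi'\rangle=0$, which follows from the translation invariance of the conserved functional via $\frac{d}{ds}Q(\phi(\cdot+s))|_{s=0}=\langle\psi,\phi'\rangle=0$; in particular $\psi\in\{\phi'\}^\perp$ and $\Upsilon_0\cap\{\phi'\}^\perp$ is a genuine codimension-two subspace of $X$ on which $\mathcal{L}$ has no kernel.

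The operator $\mathcal{L}=\partial_x^4-\gamma\partial_x^2+\omega-\phi^2$ has compact resolvent on $[0,L]$, so its spectrum is discrete; by $(H)$ it consists of a simple eigenvalue $\nu_0<0$ with unit eigenfunction $\chi_0$, the simple eigenvalue $0$ with eigenfunction $\phi'$, and positive eigenvalues $0<\nu_1\leq\nu_2\leq\cdots$ with eigenfunctions $\chi_j$, $j\geq1$. Expanding $\psi=\sum_{j\geq0}c_j\chi_j$ (no $\phi'$-component, since $\psi\perp\phi'$) yields $\langle\mathcal{L}^{-1}\psi,\psi\rangle=\frac{c_0^2}{\nu_0}+\sum_{j\geq1}\frac{c_j^2}{\nu_j}$, which can be negative only if $c_0=\langle\psi,\chi_0\rangle\neq0$; thus $\mathcal{I}<0$ forces $\chi_0\notin\Upsilon_0$. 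To obtain coercivity I would minimize $\langle\mathcal{L}v,v\rangle$ over the set $\{v\in\Upsilon_0\cap\{\phi'\}^\perp:\ \|v\|_{L^2}=1\}$; since the negative subspace is one-dimensional and the subprincipal term $v\mapsto\int_0^L\phi^2v^2\,dx$ is weakly continuous on $X$ by Rellich compactness, the infimum $\mu$ is finite and attained at some $v_\ast$.

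It remains to show $\mu>0$, and this spectral bookkeeping is the crux of the argument. The minimizer satisfies the Euler--Lagrange identity $\mathcal{L}v_\ast=\mu v_\ast+\alpha\psi+\beta\phi'$; pairing with $\phi'$ and using $\mathcal{L}\phi'=0$ together with $v_\ast,\psi\perp\phi'$ gives $\beta=0$, so $(\mathcal{L}-\mu)v_\ast=\alpha\psi$. If $\alpha=0$, then $v_\ast$ is an eigenfunction of $\mathcal{L}$ with $v_\ast\perp\phi'$; it cannot be $\chi_0$ (that would require $\langle\psi,\chi_0\rangle=0$), so $\mu\geq\nu_1>0$. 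If $\alpha\neq0$, then $\mu$ solves the characteristic equation $g(\mu):=\langle(\mathcal{L}-\mu)^{-1}\psi,\psi\rangle=\sum_{j\geq0}\frac{c_j^2}{\nu_j-\mu}=0$. Since $g'(\mu)=\sum_{j}\frac{c_j^2}{(\nu_j-\mu)^2}>0$, the function $g$ is strictly increasing between consecutive poles: on $(-\infty,\nu_0)$ it is positive, rising from $0^+$ to $+\infty$; on $(\nu_0,\nu_1)\ni0$ it increases from $-\infty$ and satisfies $g(0)=\langle\mathcal{L}^{-1}\psi,\psi\rangle<0$, so $g<0$ on $(\nu_0,0]$. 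Hence $g$ has no zero in $(-\infty,0]$, every root is strictly positive, and $\mu>0$ in all cases. I expect exactly this step — guaranteeing the constrained minimum is strictly positive rather than merely nonnegative, and ruling out $\chi_0\in\Upsilon_0$ — to be the main obstacle, since it is where both $(H)$ and the sign of $\mathcal{I}$ are genuinely used.

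Finally, I would promote the resulting $L^2$-bound $\langle\mathcal{L}v,v\rangle\geq\mu\|v\|_{L^2}^2$ on $\Upsilon_0\cap\{\phi'\}^\perp$ to the energy norm. Writing $\langle\mathcal{L}v,v\rangle=\|v''\|_{L^2}^2+\gamma\|v'\|_{L^2}^2+\omega\|v\|_{L^2}^2-\int_0^L\phi^2v^2\,dx$ and using that $\phi$ is bounded, a standard Gagliardo--Nirenberg interpolation bounds the potential term by $\epsilon\|v''\|_{L^2}^2+C_\epsilon\|v\|_{L^2}^2$. Taking a suitable convex combination of this Gårding-type inequality with the spectral lower bound $\langle\mathcal{L}v,v\rangle\geq\mu\|v\|_{L^2}^2$ then yields $\langle\mathcal{L}v,v\rangle\geq c\|v\|_{X}^2$ for some $c>0$, which is the desired conclusion.
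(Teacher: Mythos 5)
Your argument is correct, but it is worth noting that the paper does not actually prove Lemma \ref{prop2} at all: its ``proof'' is a citation to Proposition 4.12 of \cite{CNP}, which in turn runs the classical Grillakis--Shatah--Strauss/Weinstein spectral argument. What you have written is a self-contained version of essentially that argument: identify $\mathcal{L}\Phi=\theta Q'(\phi)$, translate $\mathcal{I}<0$ into $\langle\mathcal{L}^{-1}Q'(\phi),Q'(\phi)\rangle<0$, minimize the quadratic form on the doubly constrained $L^2$-sphere, use the Lagrange multiplier equation plus the monotone function $g(\mu)=\langle(\mathcal{L}-\mu)^{-1}Q'(\phi),Q'(\phi)\rangle$ to rule out a nonpositive minimum, and then upgrade $L^2$-coercivity to $H^2_{per}$-coercivity by a G\aa rding/interpolation step. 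This buys the reader something the paper does not provide, namely a visible proof in which the roles of hypothesis $(H)$ (exactly one simple negative eigenvalue, kernel spanned by $\phi'$) and of the sign of $\mathcal{I}$ are explicit. Two small points you should tighten when writing this up: in the case $\alpha\neq0$ you pass to the ``characteristic equation'' $g(\mu)=0$, which presupposes that $(\mathcal{L}-\mu)^{-1}\psi$ makes sense; you should say a word about the two resonant values $\mu=\nu_0$ (solvability of $(\mathcal{L}-\nu_0)v_\ast=\alpha\psi$ would force $c_0=\langle\psi,\chi_0\rangle=0$, contradicting $\mathcal{I}<0$) and $\mu=0$ (solvable since $\psi\perp\phi'$, but then $0=\langle v_\ast,\psi\rangle=\alpha\,g(0)$ with $g(0)<0$ gives $\alpha=0$, a contradiction). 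Likewise, in the case $\alpha=0$ you should also dispose of the possibility $\mu=0$, which is immediate since $\ker\mathcal{L}=\mathrm{span}\{\phi'\}$ and $v_\ast\perp\phi'$ with $\|v_\ast\|_{L^2}=1$. With those sentences added the proof is complete.
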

\begin{proof}
	See Proposition 4.12 in \cite{CNP}.
\end{proof}

Lemma \ref{prop2} is useful to establish  the following result.

\begin{lemma}\label{lemma1}
	Under assumption of the Lemma $\ref{prop2}$, there exist $N>0$ and $\tau>0$ such that
	$\langle\mathcal{L}v,v\rangle +2N\langle Q'(\phi),v\rangle^{2}\geq \tau||v||_{X}^2,$
	for all $v\in \left\{\phi'\right\}^{\perp}$.
\end{lemma}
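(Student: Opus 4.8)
The plan is to upgrade the coercivity furnished by Lemma~\ref{prop2} on the codimension-one subspace $\Upsilon_0\cap\{\phi'\}^\perp$ to a coercivity estimate on the whole of $\{\phi'\}^\perp$, the price being the quadratic penalty $2N\langle Q'(\phi),v\rangle^2$. The mechanism is an orthogonal-type splitting of $\{\phi'\}^\perp$ that isolates the single direction along which the constraint $\langle Q'(\phi),v\rangle=0$ fails.

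First I would set $\psi:=Q'(\phi)$ and observe that $\psi\in\{\phi'\}^\perp$. Indeed, $Q$ is a translation-invariant conserved quantity, so $y\mapsto Q(\phi(\cdot+y))$ is constant; differentiating at $y=0$ gives $\langle Q'(\phi),\phi'\rangle=0$. We may assume $\psi\neq0$ (otherwise $\Upsilon_0\cap\{\phi'\}^\perp=\{\phi'\}^\perp$ and the conclusion is immediate from Lemma~\ref{prop2}). Then every $v\in\{\phi'\}^\perp$ decomposes as
\begin{equation*}
v=w+\beta\psi,\qquad \beta=\frac{\langle\psi,v\rangle}{||\psi||_{L^2}^2},\qquad w:=v-\beta\psi.
\end{equation*}
By construction $\langle\psi,w\rangle=0$, so $w\in\Upsilon_0$, while $w\in\{\phi'\}^\perp$ as a combination of elements of that space; hence $w\in\Upsilon_0\cap\{\phi'\}^\perp$. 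Note also that $\langle Q'(\phi),v\rangle=\beta||\psi||_{L^2}^2$.

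Next I would expand
\begin{equation*}
\langle\mathcal{L}v,v\rangle=\langle\mathcal{L}w,w\rangle+2\beta\langle\mathcal{L}w,\psi\rangle+\beta^2\langle\mathcal{L}\psi,\psi\rangle,
\end{equation*}
and control each term. Lemma~\ref{prop2} gives $\langle\mathcal{L}w,w\rangle\geq c||w||_X^2$. Since the bilinear form associated with $\mathcal{L}$ is bounded on $X\times X$ and $\psi$ is a fixed smooth function, the cross term satisfies $|\langle\mathcal{L}w,\psi\rangle|\leq C||w||_X$, while $\langle\mathcal{L}\psi,\psi\rangle$ is a fixed real number. Young's inequality applied to the cross term, with a small parameter taken as a fraction of $c$, then produces a bound of the form
\begin{equation*}
\langle\mathcal{L}v,v\rangle\geq \tfrac{c}{2}||w||_X^2-C_1\beta^2
\end{equation*}
for a constant $C_1$ depending only on $c$, $C$ and $\langle\mathcal{L}\psi,\psi\rangle$.

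Finally, adding the penalty and using $\langle Q'(\phi),v\rangle^2=\beta^2||\psi||_{L^2}^4$ gives
\begin{equation*}
\langle\mathcal{L}v,v\rangle+2N\langle Q'(\phi),v\rangle^2\geq \tfrac{c}{2}||w||_X^2+\big(2N||\psi||_{L^2}^4-C_1\big)\beta^2.
\end{equation*}
Combining this with $||v||_X^2\leq 2||w||_X^2+2||\psi||_X^2\beta^2$, I would set $\tau=c/4$ and then choose $N$ so large that $2N||\psi||_{L^2}^4-C_1\geq \tfrac{c}{2}||\psi||_X^2$; this yields $\langle\mathcal{L}v,v\rangle+2N\langle Q'(\phi),v\rangle^2\geq \tau||v||_X^2$, as desired. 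The only point requiring care is the bookkeeping for $N$: because $C_1$ and $\langle\mathcal{L}\psi,\psi\rangle$ may be positive, one must check that a finite $N$ suffices, which it does since $||\psi||_{L^2}^4>0$ is fixed and every other constant is finite and independent of $v$. Everything else reduces to the routine two-variable quadratic estimate above.
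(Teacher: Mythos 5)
Your argument is correct and is essentially the paper's own proof: both split $v\in\{\phi'\}^\perp$ into a component in $\Upsilon_0\cap\{\phi'\}^\perp$ plus a multiple of $Q'(\phi)$ (the paper normalizes $w=Q'(\phi)/\|Q'(\phi)\|_{L^2_{per}}$, you do not, which is immaterial), apply Lemma~\ref{prop2} to the projected part, absorb the cross term via Cauchy--Schwarz and Young, and then choose $N$ large enough to dominate the coefficient of the one-dimensional component. Your final bookkeeping, relating $\|v\|_X^2$ to $\|w\|_X^2+\beta^2$ explicitly, is in fact slightly more careful than the paper's closing line.
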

\begin{proof}
Given $v\in \left\{\phi'\right\}^{\perp}$, let us define
	$z=v-\zeta w,$
where $w=\frac{Q'(\phi)}{||Q'(\phi)||_{L^2_{per}}}$ and $\zeta=\langle v,w\rangle$. Since  $\langle Q'(\phi),\phi'\rangle=0$, it is easy to see that $z\in\Upsilon_0\cap \left\{\phi'\right\}^{\perp}$. Thus, Lemma \ref{prop2} implies
\begin{equation}\label{eq01}
		\langle\mathcal{L}v,v\rangle \geq \zeta^2\langle\mathcal{L}w,w\rangle + 2\zeta\langle\mathcal{L}w,z\rangle +c||z||_X^2.
	\end{equation}

\indent Using Cauchy-Schwartz and Young's inequalities, we have 
	\begin{equation}
		|2\zeta\langle\mathcal{L}w,z\rangle| \leq \frac{c}{2}||z||_X^2 + \frac{2\zeta^2}{c}||\mathcal{L}w||_X^2, \nonumber
	\end{equation}
	that is,
	\begin{equation}\label{eq02}
	2\zeta\langle\mathcal{L}w,z\rangle \geq -\frac{c}{2}||z||_X^2 - \frac{2\zeta^2}{c}||\mathcal{L}w||_X^2.
	\end{equation}
	
	Choosing $N>0$ depending only on $\phi$ such that
	\begin{equation}\label{eq03}
		\langle\mathcal{L}w,w\rangle -\frac{2}{c}||\mathcal{L}w||_X^2 +2N||Q'(\phi)||_{L^2_{per}}^2\geq \frac{c}{2}\|w\|_{X}^2,
	\end{equation}
we obtain, using (\ref{eq01})-(\ref{eq03}) the following inequality
\begin{eqnarray}
\langle\mathcal{L}v,v\rangle +2N\langle Q'(\phi),v\rangle^{2}&=&\langle\mathcal{L}v,v\rangle + 2N\zeta^2||Q'(\phi)||_{L^2_{per}}^2 \nonumber \\
&\geq&\frac{c}{2}(\zeta^2\|w\|^2_X+||z||_X^2) \nonumber \\
&\geq& \tau||v||_{X}^2, \nonumber
\end{eqnarray}
where $\tau>0$ is a constant that does not depend on $v$. The proof is thus completed.
\end{proof}

Let $N>0$ be the constant obtained in the previous lemma. We define the modified Lyapunov functional $V:X\rightarrow\R$ as
$$V(u)=G(u)-G(\phi)+N(Q(u)-Q(\phi))^2,$$ where $G$ is the functional defined in (\ref{lyafun}). It is easy to see from $(\ref{ode-wave})$ that $V(\phi)=0$ and $V'(\phi)=0$.

\begin{lemma}\label{lemma2}
	Under assumptions of the Lemma $\ref{prop2}$, there exist $\alpha>0$ and $D>0$ such that 
	$V(u)\geq D\rho(u,\phi)^2,$
	for all $u\in U_{\alpha}$.
\end{lemma}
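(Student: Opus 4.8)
The plan is to establish a quadratic lower bound for the functional $V$ via a Taylor expansion around $\phi$, combined with the coercivity estimate from Lemma \ref{lemma1}. Since $V(\phi)=0$ and $V'(\phi)=0$, the leading behaviour of $V$ near $\phi$ is governed by its second variation. I would first compute $V''(\phi)$ explicitly: differentiating $V(u)=G(u)-G(\phi)+N(Q(u)-Q(\phi))^2$ twice and evaluating at $\phi$, the term $G''(\phi)$ yields exactly the operator $\mathcal{L}$, while the quadratic penalty term contributes $2N\langle Q'(\phi),\cdot\rangle^2$ (the cross terms vanish because $Q(\phi)-Q(\phi)=0$). Thus for any direction $v$ we obtain $\langle V''(\phi)v,v\rangle=\langle\mathcal{L}v,v\rangle+2N\langle Q'(\phi),v\rangle^2$, which is precisely the expression controlled by Lemma \ref{lemma1}.

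The key step is then to convert this spectral information into a genuine estimate for $V$ itself in the metric $\rho$. Given $u\in U_\alpha$, the definition of $\rho$ means there exists a translate $\phi(\cdot+y)$ nearly minimizing $\|u-\phi(\cdot+y)\|_X$; by the translation invariance of $G$ and $Q$ (and hence of $V$), I may replace $u$ by its optimal translate and reduce to estimating $V$ in terms of $\|u-\phi\|_X$ with $u$ chosen so that $u-\phi$ is essentially orthogonal to $\phi'$. Indeed, the minimization over translations forces the difference $v:=u-\phi$ to satisfy $\langle v,\phi'\rangle=0$ up to controllable higher-order corrections, placing $v$ in the space $\{\phi'\}^\perp$ where Lemma \ref{lemma1} applies. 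This is the mechanism that makes the coercivity on $\{\phi'\}^\perp$ sufficient despite the zero eigenvalue of $\mathcal{L}$.

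Writing the Taylor expansion $V(u)=V(\phi)+\langle V'(\phi),v\rangle+\tfrac{1}{2}\langle V''(\phi)v,v\rangle+o(\|v\|_X^2)$ and using $V(\phi)=V'(\phi)=0$, the first two terms drop out and Lemma \ref{lemma1} bounds the quadratic term below by $\tfrac{\tau}{2}\|v\|_X^2$. For $\|v\|_X$ small enough—which is guaranteed by taking $\alpha$ sufficiently small—the higher-order remainder is absorbed into this quadratic term, leaving $V(u)\geq\tfrac{\tau}{4}\|v\|_X^2$. Since $\|v\|_X=\|u-\phi(\cdot+y)\|_X$ can be taken arbitrarily close to $\rho(u,\phi)$, this yields $V(u)\geq D\rho(u,\phi)^2$ with $D=\tfrac{\tau}{4}$ (or any constant slightly smaller after accounting for the infimum), completing the proof.

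The main obstacle I anticipate is the rigorous control of the remainder term and the reduction to $\{\phi'\}^\perp$. The Taylor remainder must be shown to be genuinely $o(\|v\|_X^2)$ uniformly, which requires continuity of the second variation and the smoothness of $\phi\in C^\infty_{per}([0,L])$ from hypothesis $(H)$; care is needed because the nonlinearity $-\tfrac{1}{6}u^4$ in $P$ produces a cubic term in $v$ whose $X$-norm must be estimated via Sobolev embedding $H^2_{per}\hookrightarrow L^\infty_{per}$. Equally delicate is verifying that choosing the optimal translation genuinely enforces the orthogonality $v\in\{\phi'\}^\perp$; this follows from differentiating the minimization condition $\frac{d}{dy}\|u-\phi(\cdot+y)\|_X^2=0$, but making the dependence of the error on $\alpha$ explicit is the technical heart of the argument.
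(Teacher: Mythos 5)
Your overall architecture is the same as the paper's: compute $\langle V''(\phi)v,v\rangle=\langle\mathcal{L}v,v\rangle+2N\langle Q'(\phi),v\rangle^2$ (correctly noting that the cross term dies because $Q(\phi)-Q(\phi)=0$), invoke Lemma \ref{lemma1} for coercivity on $\{\phi'\}^\perp$, Taylor-expand $V$ using $V(\phi)=V'(\phi)=0$, absorb the remainder for $\alpha$ small, and then reduce a general $u\in U_\alpha$ to the orthogonal case by translating. Up to the last step this matches the paper's proof line by line.

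The one genuine gap is in how you produce the orthogonality $v\in\{\phi'\}^\perp$. You propose to take the translate $\phi(\cdot+y)$ minimizing $\|u-\phi(\cdot+y)\|_X$ and differentiate the minimization condition. But $X=H^2_{per}([0,L])$, so $\frac{d}{dy}\|u-\phi(\cdot+y)\|_X^2=0$ gives $\langle u-\phi(\cdot+y),\phi'(\cdot+y)\rangle_{H^2_{per}}=0$, i.e.\ orthogonality in the $H^2$ inner product. Lemma \ref{lemma1}, however, requires $v\in\{\phi'\}^\perp$ with respect to $\langle\cdot,\cdot\rangle$, which the paper fixes to be the $L^2_{per}$ pairing (this is the pairing in which $\phi'$ spans the kernel of $\mathcal{L}$ and in which $\langle Q'(\phi),\phi'\rangle=0$ is used). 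These two orthogonality conditions are not the same, and the discrepancy is not a higher-order correction: it is a first-order mismatch of constraints. The paper avoids this by applying the implicit function theorem to $S(u,r)=\langle u(\cdot-r),\phi'\rangle$ (an $L^2$ pairing), obtaining a $C^1$ map $r(u)$ with $u(\cdot-r(u))-\phi\perp\phi'$ in $L^2$; translation invariance of $V$ then gives $V(u)=V(u(\cdot-r(u)))\geq\frac{\tau}{4}\|u(\cdot-r(u))-\phi\|_X^2\geq\frac{\tau}{4}\rho(u,\phi)^2$, since $\rho(u,\phi)$ is an infimum over all translates and is therefore dominated by this particular one. Your final inequality goes through in the same way once the translate is chosen correctly, so the repair is to replace the minimizing-translate argument by the implicit function theorem step (or to reprove Lemma \ref{lemma1} on the $H^2$-orthogonal complement, which would require extra work). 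The rest of your proof, including the treatment of the remainder via $H^2_{per}\hookrightarrow L^\infty_{per}$, is sound.
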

\begin{proof}
	First, note that from the definition of $V$ it follows that
	$$\langle V''(u)v,v\rangle=\langle G''(u)v,v\rangle+2N(Q(u)-Q(\phi)) \langle Q''(u)v,v\rangle+2N\langle Q'(u),v\rangle^2,$$
	for all $u,v\in X$. In particular, 
	$\langle V''(\phi)v,v\rangle=\langle \mathcal{L}v,v\rangle+2N\langle Q'(\phi),v\rangle^2.$
	Consequently, from Lemma \ref{lemma1} we get
	\begin{equation} \label{eq001}
	\langle V''(\phi)v,v\rangle\geq\tau||v||_X^2,
	\end{equation}
	for all $v\in \left\{\phi'\right\}^{\perp}$.
	
	On the other hand, a Taylor expansion of $V$ around $\phi$  reveals that
	\begin{equation}\label{eq002}
	V(u)=V(\phi)+ \langle V'(\phi),u-\phi\rangle+\frac{1}{2} \langle V''(\phi)(u-\phi),u-\phi\rangle +h(u),
	\end{equation}
	where $\lim\limits_{u\to\phi}\frac{h(u)}{||u-\phi||_X^2}=0$.
	Thus, we can choose $\alpha_1>0$ such that
	\begin{equation}\label{limit1}|h(u)|\leq\frac{\tau}{4}||u-\phi||_X^2, \qquad \mbox{for all}  \ u\in B_{\alpha_1}(\phi),\end{equation}
	where $B_{\alpha_1}(\phi)=\left\{u\in X; ||u-\phi||_X <\alpha_1 \right\}$.
	
	Since $V(\phi)=0$ and $V'(\phi)=0$, we have from $(\ref{eq001})-(\ref{limit1})$ that
	$
	V(u)\geq \frac{\tau}{4}\rho(u,\phi)^2,
$
	for all $u\in B_{\alpha_1}(\phi)$ such that $(u-\phi)\in \left\{\phi'\right\}^{\perp}.$
	
	Now, let us define the smooth map $S:X\times \mathbb{R}\rightarrow\mathbb{R}$ given by $S(u,r)=\langle u(\cdot-r),\phi'\rangle$. Since $S(\phi,0)=0$ and 
	$\frac{\partial S}{\partial r}(\phi,0)=-\langle\phi',\phi'\rangle\neq0$, we guarantee, from the implicit function theorem, the existence of $\alpha_2>0$, $\delta_0>0$ and a unique $C^1-$map $r:B_{\alpha_2}(\phi)\rightarrow(-\delta_0,\delta_0)$ such that $r(\phi)=0$ and $S(u,r(u))=0$, for all $u\in B_{\alpha_2}(\phi)$. Consequently, $(u(\cdot-r(u))-\phi)\in \left\{\phi'\right\}^{\perp}$, for all $u\in B_{\alpha_2}(\phi)$. The remainder of the proof follows from similar arguments as in \cite{ANP} (see also \cite{CNP}).

\end{proof}

The above lemma is the key point to prove our main result. Roughly speaking, it says that $V$ is a suitable Lyapunov function to handle with our problem. Finally, we present the stability result.

\begin{theorem}\label{teoest} Suppose that the assumptions contained in Lemma $\ref{prop2}$ occur, then $\phi$ is orbitally stable in $X$ by the periodic flow of   $(\ref{horder})$.
\end{theorem}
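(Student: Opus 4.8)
The plan is to treat $V$ as a Lyapunov functional and deduce stability from the coercivity estimate of Lemma \ref{lemma2} by a standard continuity (barrier) argument. Three ingredients are needed: that $V$ is conserved along the flow of \eqref{horder}, that $V$ is continuous at $\phi$ with $V(\phi)=0$, and the lower bound $V(u)\geq D\rho(u,\phi)^2$ valid on the neighborhood $U_\alpha$ furnished by Lemma \ref{lemma2}.

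First I would record that $V$ is invariant under the flow. Since $V(u)=G(u)-G(\phi)+N(Q(u)-Q(\phi))^2$ and $G=P+\omega F+AM$ is a linear combination of the conserved quantities \eqref{Eu}--\eqref{Mu}, while $Q$ is itself a conserved quantity, the values $G(u(t))$ and $Q(u(t))$ do not change in time along any solution $u(t)$ of \eqref{horder}; consequently $V(u(t))=V(u_0)$ for all $t\geq 0$. Global existence of $u(t)$ in $X$ is ensured by the global well-posedness stated in the Remark of Section \ref{OSPW}. Next, since $V$ is smooth with $V(\phi)=0$, continuity gives, for any prescribed threshold, a radius $\delta>0$ such that $\|u_0-\phi\|_X<\delta$ forces $V(u_0)$ to be as small as we wish; in particular $V(u_0)<D\varepsilon^2$.

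The stability argument then runs by contradiction. Fix $\varepsilon\in(0,\alpha)$, where $\alpha$ is the radius from Lemma \ref{lemma2}, and choose $\delta\in(0,\varepsilon)$ so that $\|u_0-\phi\|_X<\delta$ yields both $\rho(u_0,\phi)\leq\|u_0-\phi\|_X<\varepsilon$ and $V(u_0)<D\varepsilon^2$. The map $t\mapsto\rho(u(t),\phi)$ is continuous, since the flow is continuous in $X$ and $\rho$ is Lipschitz in its first argument, and it starts below $\varepsilon$. If the solution ever left $U_\varepsilon$, there would be a first time $t^*>0$ with $\rho(u(t^*),\phi)=\varepsilon$ and $u(t)\in U_\varepsilon\subset U_\alpha$ for $t\in[0,t^*]$. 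Applying Lemma \ref{lemma2} at $t^*$ together with the conservation of $V$ would give
\[
D\varepsilon^2=D\rho(u(t^*),\phi)^2\leq V(u(t^*))=V(u_0)<D\varepsilon^2,
\]
a contradiction. Hence $\rho(u(t),\phi)<\varepsilon$ for all $t\geq0$, which is precisely the conclusion demanded by Definition \ref{defstab}.

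The analytic content here is light; the points requiring the most care are the \emph{exact} conservation of $V$ along the flow, which relies on $G$ and $Q$ being genuine invariants of \eqref{horder} rather than merely the formal conservation laws, and the continuity of $t\mapsto\rho(u(t),\phi)$, which rests on continuous dependence of solutions in the energy space $X$ together with global existence. These are exactly the places where the well-posedness theory cited in the Remark is invoked; granting them, the barrier argument closes immediately and delivers Theorem \ref{teoest}.
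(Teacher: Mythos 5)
Your proposal is correct and follows essentially the same route as the paper: both rely on conservation of $V$, continuity of $V$ at $\phi$, continuity of $t\mapsto\rho(u(t),\phi)$, and the coercivity bound of Lemma \ref{lemma2}, closed by a continuation argument (you phrase it as a first-exit-time contradiction, the paper as the supremum of admissible times $T$, which is the same barrier argument).
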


\begin{proof}
	Let $\alpha>0$ be the constant such that Lemma \ref{lemma2} holds.  Since $V$ is continuous at $\phi$, for a given $\varepsilon>0$, there exists $\delta\in (0,\alpha)$ such that if $||u_0-\phi||_X<\delta$  one has
$V(u_0)=V(u_0)-V(\phi)<D\varepsilon^2,$ where $D>0$ is the constant in Lemma \ref{lemma2}.
	
	The continuity in time of the function $\rho(u(t),\phi)$ allows to choose $T>0$ such that 
	\be\label{subalpha}\rho(u(t),\phi)<\alpha,\ \ \  \mbox{for all}\ t\in [0,T).\ee
	Thus, one obtains $u(t)\in U_{\alpha}$, for all $t\in[0,T)$. Combining Lemma \ref{lemma2} and the fact that  $V(u(t))=V(u_0)$ for all $t\geq0$, we have
	\be\label{estepsilon1}
	\rho(u(t),\phi)<\varepsilon,\ \ \ \ \ \mbox{for all}\ t\in[0,T).
	\ee
	Next, we prove that $\rho(u(t),\phi)<\alpha$, for all $t\in [0,+\infty)$, from which one concludes the orbital stability. Indeed, let  $T_1>0$ be the supremum of the values of $T>0$ for which $(\ref{subalpha})$ holds. To obtain a contradiction, suppose that $T_1<+\infty$.  By choosing $\varepsilon<\frac{\alpha}{2}$ we obtain, from $(\ref{estepsilon1})$ that $\rho(u(t),\phi)<\frac{\alpha}{2},$ for all $t\in[0,T_1)$. Since $t\in(0,+\infty)\mapsto\rho(u(t),\phi)$ is continuous, there is $T_0>0$ such that $\rho(u(t),\phi)<\frac{3}{4}\alpha<\alpha$, for $t\in [0,T_1+T_0)$, contradicting the maximality of $T_1$. Therefore, $T_1=+\infty$ and the theorem  is established.
\end{proof}

\section{Stability of Periodic Waves for the Equation $(\ref{horder})$}\label{applic}

In this section, we apply the arguments developed in Section \ref{OSPW} in order to obtain the orbital stability of periodic waves for the model $(\ref{horder})$ for the case $\gamma=0$. For the sake of completeness of the reader, we rewrite the model $(\ref{horder})$ in a short equation as
\begin{equation}\label{mkawa}
u_t+u^2u_x-u_{xxxxx}=0.
\end{equation}

By looking for periodic traveling wave solutions having the form $u(x,t)=\phi(x-\omega_0 t)$, we get from $(\ref{mkawa})$ (after integration) that $\phi$ solves the nonlinear ordinary differential equation
\begin{equation}\label{edokawa}\phi''''+\omega\phi-\frac{1}{3}\phi^3+A=0.
\end{equation}

 As we have already mentioned in the introduction, equation $(\ref{edokawa})$ admits an explicit $L$-periodic solution given by the ansatz (see \cite{parkes})
\begin{equation}\label{solkawa}
\phi(x)=a + b\left[\dn^2\left(\frac{2K(k)}{L}x,k\right)-\frac{E(k)}{K(k)}\right],
\end{equation}
where
\begin{equation}\label{adn}
a=\frac{8\sqrt{10}K(k)\left[K(k)k^2-2K(k)+3E(k)\right]}{L^2}\ \ \mbox{and}\ \ \ b=\frac{24\sqrt{10}K(k)^2}{L^2}.
\end{equation}
Moreover, it is to be pointed out that $k$ is a free parameter and establishes a smooth curve of periodic solutions for \eqref{edokawa} $k\mapsto \phi:=\phi_{(\omega(k),A(k))}$ such that
$$
\omega=\frac{384(k^4-k^2+1)K(k)^4}{L^4}\ \ \mbox{and}\ \ \
A=\frac{-2048\sqrt{10}}{3}\frac{(k^2-2)K(k)^6(2k^2-1)(k^2+1)}{L^6}.
$$

\begin{remark}
The reason to consider $\gamma=0$ in equation $(\ref{horder})$ is because the case $\gamma\neq0$ produces a similar periodic traveling wave solution as determined in $(\ref{solkawa})$ with a close profile depending on the Jacobi elliptic function of \textit{dnoidal}-type. This new periodic wave has more complicated constants $a$, $b$, $\omega$ and $A$ as above depending on the modulus $k\in(0,1)$ and the period $L>0$. Indeed, by simplicity let us consider $\gamma=1$. In this case, we have 
\begin{equation}\label{solkawa2}
\phi(x)=a + b\left[\dn^2\left(\frac{2K(k)}{L}x,k\right)-\frac{E(k)}{K(k)}\right] 
\end{equation}
where
\begin{equation*}
a=\frac{\sqrt{10}}{10}\frac{80\left(k^2-2\right)K(k)^2+240E(k)K(k)}{L^2}+\frac{\sqrt{10}}{10},\ \ b=\frac{24\sqrt{10}K(k)^2}{L^2}, \nonumber
\end{equation*}
and
\begin{equation*}
\omega=\frac{3840(k^4-k^2+1)K(k)^4}{10L^4}+\frac{1}{10L^4}.
\end{equation*}
The question about the stability of these periodic waves can be treated in a similar way with the necessary modifications. 
\end{remark}

Next, we will obtain the spectral properties related to the operator $\mathcal{L}=\partial^4_x+\omega-\phi^2$ as required in $(H)$. To do so, we will  utilize the following result of \cite{AN}:

\begin{proposition}\label{ANtheorem}
	Suppose that $\phi$ is a positive even solution of (\ref{edokawa}) such that $\widehat{\phi}>0$ and $\frac{\partial^2}{{\partial x}^2}(\log{g(x)})<0$, $x\neq0$, where $g$ is a real function such that $g(n)=\widehat{\phi}(n)$, $n\geq0$. Then the operator $\mathcal{L}$ has only one negative eigenvalue which is simple and zero is a simple eigenvalue whose eigenfunction is $\phi'$.
\end{proposition}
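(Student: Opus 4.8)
The plan is to read off the kernel directly, split the problem by parity, then use $\widehat{\phi}>0$ to locate the bottom of the spectrum and the log-concavity of $g$ to close the count from above. Differentiating the profile equation (\ref{edokawa}) once in $x$ gives $\phi''''' + \omega\phi' - \phi^2\phi' = 0$, that is $\mathcal{L}\phi'=0$, so $0$ is an eigenvalue with eigenfunction $\phi'$. Since $\phi$ is even, $\mathcal{L}$ commutes with the reflection $x\mapsto -x$ and hence leaves invariant the subspaces of even and of odd $L$-periodic functions, with $\phi'$ lying in the odd sector. The proposition is thus equivalent to showing that $\mathcal{L}$ has exactly one simple negative eigenvalue with even eigenfunction and no kernel in the even sector, while on the odd sector it is nonnegative with kernel exactly $\mathrm{span}\{\phi'\}$.

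To locate the negative spectrum I would pass to the Fourier side. In the basis $e_n(x)=L^{-1/2}e^{2\pi i n x/L}$ the $n$-th coefficient of $\mathcal{L}\psi$ is $\big((2\pi n/L)^4+\omega\big)\widehat{\psi}(n)-(\widehat{\phi^2}*\widehat{\psi})(n)$, where $\widehat{\phi^2}=\widehat{\phi}*\widehat{\phi}$ is a strictly positive sequence precisely because $\widehat{\phi}>0$. The perturbation is therefore convolution by a positive kernel: although the fourth-order symbol rules out any positivity-improving property of $\mathcal{L}$ in physical space, on the sequence side the relevant resolvent composed with multiplication by $\phi^2$ preserves the cone of nonnegative Fourier sequences. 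A Krein--Rutman/Perron--Frobenius argument in that cone then forces the bottom eigenvalue $\lambda_0$ of $\mathcal{L}$ to be simple with an eigenfunction whose Fourier coefficients are all positive; such an eigenfunction is even and hence distinct from the odd function $\phi'$. As $\phi'$ already lies in the kernel while the simple minimizer is even, one must have $\lambda_0<0$, so $\mathcal{L}$ has at least one negative eigenvalue and it is simple.

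The decisive and hardest step is to exclude a second nonpositive eigenvalue and to show that $0$ is simple, and this is where $(\log g)''<0$ is indispensable. Variationally it suffices to prove that $\langle\mathcal{L}\psi,\psi\rangle>0$ whenever $\psi$ is $L^2$-orthogonal both to the sign-definite ground state and to $\phi'$; in Fourier variables this is a coercivity estimate pitting the diagonal multiplier $\sum_n\big((2\pi n/L)^4+\omega\big)|\widehat{\psi}(n)|^2$ against the positive convolution form generated by $\widehat{\phi^2}$. Strict log-concavity of the interpolant $g$ of $\widehat{\phi}$ is exactly the total-positivity (P\'olya-frequency) hypothesis that governs this competition: it constrains the coefficients $\widehat{\phi}(n)$ to decay in a sufficiently regular fashion that the convolution kernel cannot overwhelm the diagonal on more than the one direction already accounted for by $\lambda_0$, and it simultaneously pins the odd-sector kernel to $\mathrm{span}\{\phi'\}$. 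The genuine obstacle is that $\partial_x^4+\omega-\phi^2$ admits no classical Sturmian oscillation theory, so one cannot simply count sign changes of eigenfunctions; the role of $(\log g)''<0$ is to restore enough sign-regularity of the associated periodic resolvent kernel to act as a substitute, and converting that structural property into the stated coercivity is the technical heart of the proof. The parity splitting from the first step keeps the simplicity of the negative eigenvalue and the simplicity of the zero eigenvalue in complementary sectors, so the two can be established independently.
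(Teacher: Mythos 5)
The paper does not prove this proposition at all: its ``proof'' is a citation to Theorem 4.1 of \cite{AN} (with \cite{albert1} for the continuous analogue). Your sketch correctly reconstructs the architecture of that cited argument --- differentiating \eqref{edokawa} to get $\mathcal{L}\phi'=0$, splitting by parity, and running a Perron--Frobenius/Krein--Rutman argument for $(D-\lambda)^{-1}K$ on the cone of positive Fourier sequences (with $D$ the diagonal multiplier $(2\pi n/L)^4+\omega$ and $K$ convolution by $\widehat{\phi^2}=\widehat{\phi}*\widehat{\phi}>0$) to produce a simple, even, strictly negative ground state. That part, including the deduction $\lambda_0<0$ from the simplicity of the ground state together with $\phi'\in\ker\mathcal{L}$, is sound.

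The genuine gap is exactly where you place ``the technical heart'': you never actually use the hypothesis $\frac{\partial^2}{\partial x^2}\log g<0$. You assert that log-concavity ``constrains the coefficients \ldots so that the convolution kernel cannot overwhelm the diagonal on more than the one direction already accounted for'' and ``pins the odd-sector kernel to $\mathrm{span}\{\phi'\}$'' --- but these are restatements of the conclusions ($n(\mathcal{L})=1$, $z(\mathcal{L})=1$), not an argument. Two concrete steps are missing. First, the odd sector: in the sine basis the convolution operator has matrix entries $\widehat{\phi^2}(n-m)-\widehat{\phi^2}(n+m)$, whose positivity is \emph{not} a consequence of $\widehat{\phi^2}>0$ alone; it requires the monotonicity/total-positivity of the sequence, which is what the discrete PF$(2)$ condition $\widehat{\phi}(n)^2\geq\widehat{\phi}(n-1)\widehat{\phi}(n+1)$ (equivalent to log-concavity of $g$ at integers, and inherited by $\widehat{\phi^2}$ under convolution) delivers. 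Only then does Perron--Frobenius in the odd sector identify $\phi'$ (whose sine coefficients $n\widehat{\phi}(n)$ are positive) as the simple odd ground state, giving no negative odd eigenvalues and simplicity of zero. Second, the even sector: excluding a second nonpositive even eigenvalue is not a soft coercivity statement; in \cite{albert1} and \cite{AN} it is a specific comparison lemma, proved from the positivity of the $2\times 2$ minors of the kernel $\widehat{\phi^2}(n-m)$, showing that the second even eigenvalue is bounded below by the first odd eigenvalue, which equals zero. Without supplying these two lemmas (or an equivalent mechanism), your sketch establishes $n(\mathcal{L})\geq 1$ and $0\in\sigma(\mathcal{L})$ but not the exact counts claimed in the proposition.
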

\begin{proof}
The proof of this result can be found in \cite[Theorem 4.1]{AN}. See also \cite{albert1} for the continuous case.
\end{proof}

\indent We are able to prove our stability result.

\begin{theorem}\label{mainT}
The periodic waves in $(\ref{solkawa})$  are orbitally stable in the sense of Definition $\ref{defstab}$.
\end{theorem}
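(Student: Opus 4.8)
The plan is to verify, for the explicit dnoidal profile $(\ref{solkawa})$ with the constants $(\ref{adn})$, the two structural ingredients behind Theorem \ref{teoest}: the spectral hypothesis $(H)$, and the existence of a smooth $\Phi$ with $\langle\mathcal{L}\Phi,\varphi\rangle=0$ for every $\varphi\in\Upsilon_0$ together with $\mathcal{I}=\langle\mathcal{L}\Phi,\Phi\rangle<0$. Once both hold, Lemma \ref{prop2} applies and Theorem \ref{teoest} yields orbital stability in $X=H^2_{per}([0,L])$ with no further work. Throughout I take $\gamma=0$, so that $\mathcal{L}=\partial_x^4+\omega-\phi^2$, and I identify the conserved quantity $Q$ appearing in the framework with the $L^2$ functional $F$, so that $Q'(\phi)=\phi$ and $\Upsilon_0=\{\phi\}^\perp$.

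First I would establish $(H)$ by checking the hypotheses of Proposition \ref{ANtheorem}. Evenness of $\phi$ is immediate since it is built from $\dn^2$. For the Fourier conditions I would insert the classical expansion of $\dn^2$, whose nonconstant coefficients are positive multiples of $\tfrac{n}{\sinh(\alpha n)}$ with $\alpha=\log(1/q)>0$ and $q=q(k)$ the elliptic nome; multiplying by $b>0$ shows $\widehat{\phi}(n)>0$ for every $n\geq1$, for all $k\in(0,1)$. The interpolant may be taken as $g(x)=\kappa\,x/\sinh(\alpha x)$ with $\kappa>0$, for which a direct differentiation gives $(\log g)''(x)=\alpha^2/\sinh^2(\alpha x)-1/x^2<0$ for $x\neq0$, the inequality being equivalent to $\sinh(\alpha x)>\alpha x$. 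Thus the log-concavity requirement holds throughout $(0,1)$ and is not an obstacle. The remaining condition $\widehat{\phi}(0)=a>0$, with $a$ as in $(\ref{adn})$, amounts to $K(k)\big(k^2-2\big)+3E(k)>0$, which holds on a subinterval $k\in(0,k_0)$ and simultaneously guarantees the pointwise positivity of $\phi$ needed by the proposition; I would restrict attention to this range. Granting these, Proposition \ref{ANtheorem} gives exactly that $\mathcal{L}$ has a single simple negative eigenvalue and that $\ker\mathcal{L}=\mathrm{span}\{\phi'\}$, which is $(H)$.

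Next I would produce $\Phi$ and examine the sign of $\mathcal{I}$. Since $\langle\mathcal{L}\Phi,\varphi\rangle=0$ for all $\varphi\in\Upsilon_0$ forces $\mathcal{L}\Phi\in\mathrm{span}\{\phi\}$, I set $\Phi=\mathcal{L}^{-1}\phi$, which is well defined because $\phi\perp\ker\mathcal{L}$ by parity ($\langle\phi,\phi'\rangle=\tfrac12\int_0^L(\phi^2)'\,dx=0$). Then $\mathcal{I}=\langle\phi,\mathcal{L}^{-1}\phi\rangle$. Differentiating the profile equation $(\ref{edokawa})$ with respect to $\omega$ at fixed $A$ yields the identity $\mathcal{L}\,\partial_\omega\phi=-\phi$, so that formally $\mathcal{I}=-\langle\phi,\partial_\omega\phi\rangle=-\tfrac12\,\partial_\omega\|\phi\|_{L^2}^2$, and the negativity of $\mathcal{I}$ reduces to the monotonicity statement that $\|\phi\|_{L^2}^2$ increases with $\omega$. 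Because the explicit family $(\ref{solkawa})$–$(\ref{adn})$ is parametrized by $k$ at fixed period $L$, I would make this rigorous by transporting the derivative through the Jacobian of $k\mapsto(\omega(k),A(k))$ and expressing everything in complete elliptic integrals; equivalently, one can build the even periodic solution $\Phi$ explicitly as a polynomial in $\dn^2$ (using that $\mathcal{L}$ maps such polynomials to polynomials in $\dn^2$, reducing $\mathcal{L}\Phi=\phi$ to a finite linear system) and compute $\langle\phi,\Phi\rangle$ directly.

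The main obstacle is precisely this last sign verification: after the reduction, $\mathcal{I}$ is a ratio of complete elliptic integrals in $k$ whose negativity must be controlled over the admissible range $(0,k_0)$, and monotonicity of such elliptic-integral expressions is the delicate analytic point. By contrast the spectral step is clean—the Fourier positivity and log-concavity follow from the $\sinh(t)>t$ inequality and the explicit sign of $a$. Once $\mathcal{I}<0$ is secured together with $(H)$, the hypotheses of Lemma \ref{prop2} are met, and Theorem \ref{teoest} applies verbatim, establishing the orbital stability of the waves $(\ref{solkawa})$ and completing the proof of Theorem \ref{mainT}.
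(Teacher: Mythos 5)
Your treatment of the spectral hypothesis $(H)$ follows the paper's route and in fact sharpens it: the paper verifies $\frac{\partial^2}{\partial x^2}\log g_k(x)<0$ only by displaying a graph, whereas your computation $(\log g)''(x)=\alpha^2/\sinh^2(\alpha x)-1/x^2<0$, equivalent to $\sinh(\alpha x)>\alpha x$, is a clean rigorous argument. Your remark that $\widehat{\phi}(0)=a>0$ fails for $k$ near $1$ (since $K(k)(k^2-2)+3E(k)\to-\infty$) is also a point the paper passes over silently, and the restriction to a range where $\phi>0$ is genuinely needed to invoke Proposition \ref{ANtheorem}.

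The gap is in the second half. By taking $Q=F$ you force $\mathcal{L}\Phi\in\mathrm{span}\{\phi\}$ and hence $\Phi=\mathcal{L}^{-1}\phi$, and the sign of $\mathcal{I}=\langle\mathcal{L}^{-1}\phi,\phi\rangle$ is exactly what you do not establish. Neither of your two proposed reductions closes it: the identity $\mathcal{L}\,\partial_\omega\phi=-\phi$ requires a \emph{two}-parameter family $\phi_{(\omega,A)}$ with $A$ held fixed, while $(\ref{solkawa})$--$(\ref{adn})$ only provide a one-parameter curve $k\mapsto(\omega(k),A(k))$ at fixed $L$, so $\partial_\omega\phi|_A$ cannot be obtained by ``transporting through the Jacobian'' of a map from one parameter to two; and the polynomial-in-$\dn^2$ ansatz for $\mathcal{L}\Phi=\phi$ is overdetermined (a degree-$m$ ansatz gives $m+3$ coefficient equations in $m+1$ unknowns), so it generically has no solution. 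The paper avoids all of this by choosing $Q(u)=\frac{\partial\omega}{\partial k}F(u)+\frac{\partial A}{\partial k}M(u)$ and $\Phi=\frac{\partial\phi}{\partial k}$: differentiating $(\ref{edokawa})$ along the explicit curve gives $\mathcal{L}\Phi=-\frac{\partial\omega}{\partial k}\phi-\frac{\partial A}{\partial k}$, which is automatically proportional to $Q'(\phi)$, and then $\mathcal{I}=-\frac{\partial\omega}{\partial k}\frac{\partial}{\partial k}F(\phi)-\frac{\partial A}{\partial k}\frac{\partial}{\partial k}M(\phi)$ is an explicitly computable elliptic-integral expression, $\mathcal{I}=-f(k)/L^7$ (whose positivity of $f$ the paper, admittedly, again only exhibits graphically). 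Without either adopting that choice of $Q$ and $\Phi$ or otherwise determining the sign of $\langle\mathcal{L}^{-1}\phi,\phi\rangle$, your argument does not reach the hypotheses of Lemma \ref{prop2}, and Theorem \ref{teoest} cannot be invoked.
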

\begin{proof}
In fact, according with \cite{ayse}, solution $\phi$ in (\ref{solkawa}) has the Fourier expansion
$
\phi(x) = a+\sum_{n=1}^{\infty}n\Gamma \mbox{csch}\left(\frac{n\pi K(k')}{K(k)}\right)\cos\left(\frac{2\pi n}{L}x\right), 
$
where $\Gamma=\frac{b\pi^2}{K(k)^2}$ and $k'=\sqrt{1-k^2}$.
Therefore, the Fourier coefficients of $\phi$ are given by
\begin{equation*}
\widehat{\phi}(n)=\left \{
\begin{array}{cc}
a, & n=0 \\
\frac{\Gamma}{2} n\mbox{csch}\left(\frac{n\pi K(k')}{K(k)}\right), & n\neq0. \\
\end{array}
\right.
\end{equation*}

By considering $g_k(x):=\frac{\Gamma}{2} x\mbox{csch}\left(\frac{x\pi K(k')}{K(k)}\right)=\frac{12\sqrt{10}\pi^2}{L^2} x\mbox{csch}\left(\frac{x\pi K(k')}{K(k)}\right)$, $x\in\mathbb{R}$, it is possible to see that  $\frac{\partial^2}{{\partial x}^2}(\log{g_k(x)})<0$, for all $x\in\R$ and $k\in(0,1)$ (see Figure 3.1).

\begin{figure}[!htb]
	\includegraphics[scale=0.4]{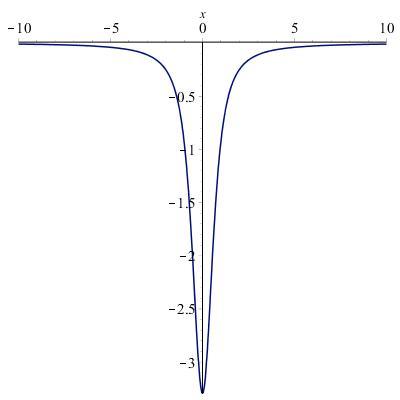}
	\caption{Graph of $\frac{\partial^2}{{\partial x}^2}(\log{g(x)})$ with $k=\frac{\sqrt{2}}{2}$.}
\end{figure}

So, using  Proposition \ref{ANtheorem}, we obtain that $\mathcal{L}$ has only one negative eigenvalue which is simple and zero is a simple eigenvalue whose eigenfunction is $\phi'$. Therefore, we have that $(H)$ holds. 

The next step is to find $\Phi$ as in Lemma $\ref{prop2}$. In fact, we consider $\Phi=\frac{\partial}{\partial k}\phi$. Note that, using \eqref{edokawa}, we have 
\begin{equation}\label{I1}
\mathcal{L}\Phi=-\frac{\partial\omega}{\partial k} \phi- \frac{\partial A}{\partial k}.  
\end{equation}
Equality $(\ref{I1})$ inspires us from $(\ref{functQ})$ in choosing $\nu=\frac{\partial \omega}{\partial k}$ and $\mu=\frac{\partial A}{\partial k}$ to get $Q(u)=\frac{\partial\omega}{\partial k} F(u)+\frac{\partial A}{\partial k}M(u)$ and $\mathcal{L}\Phi=-Q'(\phi)$. Thus, we have $\langle \mathcal{L}\Phi,\varphi\rangle=-\langle Q'(\phi),\varphi\rangle=0$, for all $\varphi\in\Upsilon_0$ and
$$
\mathcal{I}=\langle\mathcal{L}\Phi,\Phi\rangle=-\frac{\partial\omega}{\partial k}\frac{\partial}{\partial k}F(\phi)-\frac{\partial A}{\partial k}\frac{\partial}{\partial k}M(\phi).
$$
Using the explicit form \eqref{solkawa}, it is possible to deduce that $M(\phi)=aL$ and
$F(\phi)=\frac{480 K(k)^3}{L^4}\left[k^2L+\frac{2L(k^4-k^2+1)}{3}K(k)\right].$
Therefore, we are able to obtain 
\begin{equation}\label{Lkawa}
\mathcal{I}=\langle\mathcal{L}\Phi,\Phi\rangle=-\frac{1}{L^7}f(k),
\end{equation}
where $f$ is a complicated positive function depending smoothly on $k\in(0,1)$ (see Figure 3.2). This proves the required in Lemma $\ref{prop2}$. From Theorem \ref{teoest}, we conclude that $\phi$ is orbitally stable in $H_{per}^2([0,L])$ by the periodic flow of (\ref{mkawa}).

\begin{figure}[!htb]
	\includegraphics[scale=0.4]{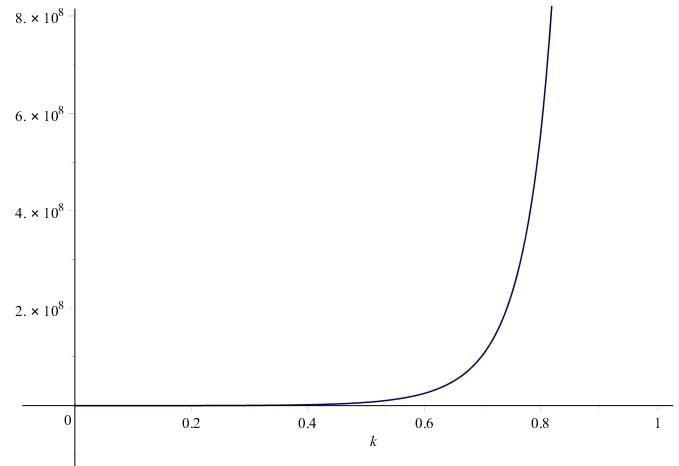}
\caption{Graphic of $f(k)$ in $(\ref{Lkawa})$.}
\end{figure}

\end{proof}

\section*{Acknowledgements}

F. C. is supported by FAPESP/Brazil grant 2017/20760-0. F. N. is supported by Funda\c{c}\~ao Arauc\'aria/Brazil, CNPq/Brazil and CAPES/Brazil.

\end{document}